\theoremstyle{plain}
 \newtheorem{thm}{Theorem}[section]
 \newtheorem{lem}{Lemma}[section]
\theoremstyle{definition}
\theoremstyle{remark}
 \numberwithin{equation}{section}
\renewcommand{\leq}{\leqslant}
\renewcommand{\geq}{\geqslant}
\title[Existence    Uniqueness ]{On the Existence    Uniqueness  and Numerical Computation of    Non-linear Coupled Elliptic PDE System with its application.}
\subjclass[2010]{Primary REQUIRED; Secondary OPTIONAL}
\keywords{Galerkin Method $\cdot$ Brouwer's fixed point theorem $\cdot$ Porous media $\cdot$ Convection $\cdot$ Existence and uniqueness $\cdot$ Finite Element Analysis $\cdot$ Finite element computation.}
\author[Surname]{\bfseries B.V. Rathish Kumar, Sangita Dey} 
\address{ 
Indian Institute of Technology Kanpur \\ Kanpur,U.P,208016,India 
}
\email{ bvrk@iitk.ac.in, maths.sangita@gmail.com}
\begin{document}

{\begin{flushleft}\baselineskip9pt\scriptsize
MANUSCRIPT
\end{flushleft}}
\vspace{18mm} \setcounter{page}{1} \thispagestyle{empty}

\begin{abstract}
In this study we prove the existence-uniqueness of a coupled
non-linear elliptic PDE system using Lax-Milgram theorem, Galerkin
Method, Brouwer's fixed point theorem. Later we derive the finite
element scheme for the numerical solution of the PDE system and also
carry out the convergence analysis for the derived scheme.  Further
successfully apply the scheme to an application related to free
convection phenomena.

\end{abstract}

\maketitle

\section{Introduction}  

Nonlinear analysis has become a topic of intense research for its
ability to deal with scientific and engineering applications. Study
of nonlinear elliptic PDE systems is one such topic in this regard.
Even though a good amount of research has been carried out on this
topic, still there is no general applicable method to prove
existence uniqueness of such problems. In some cases we can apply
few of the standard methods like variational
method\cite{Kes88}, semigroup
principle\cite{Kes88}, monotone
method\cite{Kes88}, Galerkin method
e.t.c.\cite{Kes88},\cite{Cer12,Chm86,Gir12,Gir09},\cite{Xsh05}.
In this paper we have used Galerkin approach to establish existence
uniqueness of a non-linear coupled elliptic PDE system
\begin{equation} 
-\triangle \psi -R_a\frac{\partial \theta}{\partial x}=f_1 ~~\mbox{in}~ \Omega \subset \mathbb{R}^2
\end{equation}
\begin{equation}
J(\psi,\theta) = \triangle \theta+f_2,f_2>0 ~\mbox{in}~ \Omega \subset \mathbb{R}^2
\end{equation}
where J denotes determinant of $Jacobian~
of~\psi,\theta~\mbox{and}~\psi,\theta \in ~ C^2(\Omega)$ are steam
function, and temperature respectively, $f_1,f_2 \in
~L^2(\Omega),~\mbox{the~ source~ term~},\\ \Omega=(0,1)^2,~ R_a
~\mbox{is~Rayleigh~ number}, ~\psi ,\theta = 0 ~\mbox{on}~ \partial
\Omega,$ which is known to describe the  natural convection and flow
dynamics in porous media. While a good number of analytical and
numerical works, related to its solution, have been reported in the
literature  the basic question of
 mathematical existence-uniqueness of the model has not been reported in anywhere  to the best on our knowledge.
Further this model is known to be basic model based on which several
other elliptic PDE models, viz., Darcy Brinkman model, Dary
Forchheimer model etc.,  describing more complex physical phenomena
have been proposed. Hence it is important to establish the
existence-uniqueness of the solution to the considered model.
 We have stated our main theorem in Section $2$ and established its proof by using Galerkin method and  Brouwer's fixed point theorem, under some conditions and assumptions mentioned therein. Section $3$ contains the finite element formulation and finite element analysis of the above mentioned PDE where we tried to derive the order of convergence simply by using Galerkin method. Section $4$ is about the numerical experiment and algorithm used for finite element computation where we have shown the contours representing the finite element solution for different values of $R_a$.
\\
\textbf{Definition:} $H^1(\Omega)=\{v~\in L^2(\Omega):\frac{\partial v}{\partial x},\frac{\partial v}{\partial y}~\in~L^2(\Omega)\}$,\\
$H_0^1(\Omega)=\{v~\in~H^1(\Omega):v=0~on~\partial\Omega\}$ where $\partial\Omega $ means boundary of $\Omega$.

\section{Our Model}\label{sec:2}
\label{}
The existence uniqueness of the coupled PDE system $(1)-(2)$ is established in this section through the following theorem.
\begin{thm}
Consider the problem P
\begin{equation}
-\triangle\psi -R_a\frac{\partial \theta}{\partial x}=f_1 \nonumber~~~~~ ~~~~~~~~~~~~~~~~~~~~~~~(1)
\end{equation}
\begin{equation}
J(\psi,\theta) = \triangle\theta+f_2, f_2>0,f_1,f_2~\in ~L^2(\Omega)\nonumber~~~~~(2)
\end{equation}
$\psi,\theta \in ~ C^2(\Omega),~\psi ,\theta = 0 ~\mbox{on}~ \partial \Omega$.
Problem P has atleast  one weak solution $(\psi,\theta) \in (H_0^1(\Omega))^2$, satisfying $\|\nabla \psi\|_2^2+\|\nabla \theta\|_2^2 \leq R^2, R^2=C^2 (\|f_1\|_2^2+\|f\|_2^2)/2B$
where the data satisfy the estimate $\|\frac{\partial \theta}{\partial x}\|_4 \leq L,\|\frac{\partial \theta}{\partial y}\|_4 \leq L,$
\\$B=\mbox{min}\{ (1-CR_a/2-AL),(1/2-R_a/2-AL)\}>0$. Furthermore the solution is unique under the conditions
$(1/2-CR/(\surd{2})-CR_a/2)~\mbox{and}~(1/2-R_a/2-{2\surd{2}RC})~ > 0$ where C is the constants due to Poincare inequality and A is the constant due to Sobolev embedding.
\end{thm}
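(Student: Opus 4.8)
The plan is to follow the Galerkin--Brouwer strategy signalled in the statement. First I would fix a basis $\{\phi_i\}_{i\ge 1}$ of $H_0^1(\Omega)$ and seek approximate solutions $\psi_n=\sum_{i=1}^n\alpha_i\phi_i$, $\theta_n=\sum_{i=1}^n\beta_i\phi_i$ in $V_n=\mathrm{span}\{\phi_1,\dots,\phi_n\}$. Multiplying $(1)$ by a test function $v\in V_n$ and $(2)$ by $w\in V_n$ and integrating by parts (the boundary terms vanish since $v,w,\psi,\theta$ are zero on $\partial\Omega$) gives the weak system
\[
(\nabla\psi_n,\nabla v)-R_a\Big(\tfrac{\partial\theta_n}{\partial x},v\Big)=(f_1,v),\qquad (\nabla\theta_n,\nabla w)+\big(J(\psi_n,\theta_n),w\big)=(f_2,w).
\]
Reading the $2n$ scalar equations obtained by letting $v,w$ range over the basis as the components of a continuous map $P_n\colon\mathbb{R}^{2n}\to\mathbb{R}^{2n}$ in the coefficients $\xi=(\alpha,\beta)$, the existence of a Galerkin solution reduces to finding a zero of $P_n$.

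For this I would invoke the standard Brouwer corollary: if $P_n(\xi)\cdot\xi\ge 0$ on the sphere $|\xi|=R$, then $P_n$ vanishes somewhere in the ball $|\xi|\le R$. Taking $v=\psi_n$, $w=\theta_n$ produces
\[
P_n(\xi)\cdot\xi=\|\nabla\psi_n\|_2^2+\|\nabla\theta_n\|_2^2-R_a\Big(\tfrac{\partial\theta_n}{\partial x},\psi_n\Big)+\big(J(\psi_n,\theta_n),\theta_n\big)-(f_1,\psi_n)-(f_2,\theta_n).
\]
The Rayleigh coupling is controlled by Cauchy--Schwarz, the Poincar\'e inequality $\|\psi_n\|_2\le C\|\nabla\psi_n\|_2$ and Young's inequality, giving a factor $CR_a/2$; the source terms are handled the same way. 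The Jacobian term I would bound directly by H\"older with exponents $(2,4,4)$, using the hypotheses $\|\partial\theta/\partial x\|_4,\|\partial\theta/\partial y\|_4\le L$ and the Sobolev embedding $\|\cdot\|_4\le A\|\nabla\cdot\|_2$; this is precisely where the constant $A$ and the threshold $AL$ enter. Collecting terms yields $P_n(\xi)\cdot\xi\ge B\big(\|\nabla\psi_n\|_2^2+\|\nabla\theta_n\|_2^2\big)-(\text{data terms})$ with $B=\min\{1-CR_a/2-AL,\;1/2-R_a/2-AL\}$, so the hypothesis $B>0$ forces the right-hand side to be positive once $|\xi|=R$ with $R^2=C^2(\|f_1\|_2^2+\|f_2\|_2^2)/2B$. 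Brouwer then delivers $(\psi_n,\theta_n)$ with $\|\nabla\psi_n\|_2^2+\|\nabla\theta_n\|_2^2\le R^2$, uniformly in $n$.

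Next I would pass to the limit. The uniform bound yields a subsequence with $\psi_n\rightharpoonup\psi$, $\theta_n\rightharpoonup\theta$ weakly in $H_0^1(\Omega)$ and, by Rellich--Kondrachov (compact into $L^q$ for every $q<\infty$ in two dimensions), strongly in $L^4(\Omega)$. The linear terms pass to the limit by weak convergence. The delicate point is the trilinear Jacobian term, which is a product of two weakly convergent gradients; here I would exploit its antisymmetric null-Lagrangian structure, rewriting $(J(\psi_n,\theta_n),w)=-(J(\psi_n,w),\theta_n)$ for fixed smooth $w$ (valid since $\psi_n\in H_0^1$, so the divergence-free field $\nabla^{\perp}\psi_n$ has vanishing normal trace). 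One now pairs the weakly convergent $\nabla\psi_n$ against the fixed $\nabla w$ and the strongly convergent $\theta_n$, and this weak-times-strong product converges to the desired limit. A density argument extends the identity to all test functions, so $(\psi,\theta)\in(H_0^1(\Omega))^2$ is a weak solution, and weak lower semicontinuity of the norm preserves the bound $R$.

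Finally, for uniqueness I would take two solutions, subtract, and test the differences $\Psi=\psi_1-\psi_2$, $\Theta=\theta_1-\theta_2$ against themselves. Bilinearity of the Jacobian splits the difference into terms of type $J(\Psi,\theta_1)$ and $J(\psi_2,\Theta)$, each bounded by H\"older--Sobolev together with the a priori bounds $\|\nabla\psi_i\|_2,\|\nabla\theta_i\|_2\le R$; this is what generates the combinations $CR/\sqrt2$ and $2\sqrt2\,RC$. Together with the $CR_a/2$ and $R_a/2$ contributions from the coupling term one arrives at
\[
\big(\tfrac12-CR/\sqrt2-CR_a/2\big)\|\nabla\Psi\|_2^2+\big(\tfrac12-R_a/2-2\sqrt2\,RC\big)\|\nabla\Theta\|_2^2\le 0,
\]
so the two stated positivity conditions force $\Psi=\Theta=0$. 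Throughout, I expect the main obstacle to be the Jacobian: closing the a priori estimate forces the $L^4$ control of the temperature gradients (hence the hypothesis on $L$), while the limit passage hinges on using the antisymmetric structure rather than a naive product of weak limits, and the uniqueness step requires the smallness of $R_a$ and the data to make both coefficients strictly positive.
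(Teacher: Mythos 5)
Your proposal follows the same overall architecture as the paper's proof: a Galerkin scheme on finite-dimensional subspaces of $(H_0^1(\Omega))^2$, the corollary of Brouwer's theorem (a zero exists in the ball once $P_n(\xi)\cdot\xi\geq 0$ on the bounding sphere), the same Poincar\'e/Young/H\"older-$(2,4,4)$/Sobolev estimates producing the constants $B$ and $R$, and the same subtract-and-test energy argument for uniqueness. The genuine difference is the passage to the limit in the Jacobian term. The paper telescopes $J(\psi_m,\theta_m)v_m^2-J(\psi,\theta)v^2$ directly and controls the resulting products by asserting pointwise bounds $|\partial\psi_m/\partial x|\leq R/\sqrt{2}$ a.e., deduced from $\|\nabla\psi_m\|_2\leq R$; that deduction is not valid (an $L^2$ bound on a gradient gives no almost-everywhere bound), and one of the paper's terms additionally requires second derivatives of the weak limit $\theta$, which are not available. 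Your route --- exploiting the skew-symmetry $(J(\psi_n,\theta_n),w)=-(J(\psi_n,w),\theta_n)$ so that, after the derivative is moved onto a fixed smooth test function, the term becomes a pairing of a weakly convergent gradient with a sequence converging strongly in $L^2$/$L^4$ by Rellich--Kondrachov --- is the standard Navier--Stokes-type argument; it is more robust and in fact repairs the weakest step of the paper's proof, at the price of a density argument to recover arbitrary test functions.

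One caveat: in the uniqueness step you inherit the same borderline difficulty the paper has. With only the a priori bounds $\|\nabla\psi_i\|_2,\|\nabla\theta_i\|_2\leq R$, H\"older with exponents $(2,4,4)$ applied to a term such as $\int_\Omega J(\bar\psi,\theta_2)\bar\theta$ needs an $L^4$ bound on $\nabla\theta_2$, which an $H^1$ bound does not supply in two dimensions; the constants $CR/\sqrt{2}$ and $2\sqrt{2}RC$ you announce do not follow from H\"older--Sobolev plus the $H^1$ bounds alone. The paper reaches them only by reusing its unjustified pointwise $R/\sqrt{2}$ bounds, so this step is equally incomplete in both arguments; making it rigorous requires an extra hypothesis (for instance the $L^4$ gradient bounds already assumed for $\theta$ in the theorem, imposed on both solutions) or a Ladyzhenskaya-type interpolation argument carried out explicitly.
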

\begin{proof}
\textbf{The Weak Formulation:}\begin{equation}\int_{\Omega}\nabla \psi \cdot\nabla v_1- R_a\int_{\Omega}(\frac{\partial \theta}{\partial x})v_1 + \int_{\Omega}\nabla \theta\cdot\nabla v_2 + \int_\Omega  J v_2 - \int_{\Omega}(f_1 v_1+f_2 v_2)=0\nonumber\end{equation}where $v_1,v_2~\in~H_0^1(\Omega), \Omega=(0,1)\times (0,1)$.
Since $(H_{0}^1(\Omega))^2$ is separable it has a countable orthonormal basis $\{e_1,e_2,e_3,......\}$ where \\$e_i=(e_i^1,e_i^2)~\mbox{is a sequence of smooth functions with}~e_i^2 ~\in~H_0^1(\Omega)\cap H^2(\Omega)~\forall~i$.\\Let $W_m=\mbox{span}\{e_1,e_2,e_3,.....,e_m\}~\forall ~m.$ Then the Galerkin formulation for the above problem on $W_m$ is
$:~ \mbox{to~ find} ~(\psi_m,\theta_m)~\mbox{such ~that}~ \forall ~1 \leq i \leq m, \\\int_{\Omega}\nabla \psi_m\cdot\nabla e_i^1- R_a\int_{\Omega}(\frac{\partial \theta}{\partial x})e_i^1 + \int_{\Omega}\nabla \theta\cdot\nabla e_i^2 + \int_\Omega  J e_i^2 - \int_{\Omega}(f_{1} e_i^2+f_{2} e_i^2)=0$. Denote this problem as $P_m$.\\
 \textbf{Brouwer's Fixed Point Theorem:} Let $T: B \rightarrow B $ be a continuous map where B is a compact, convex set. Then T has a fixed  point in B.
\begin{lem} We assume that the data satisfy the estimate as mentioned in Theorem $1$.
 Then the problem $(P_m)$ for each m has atleast one solution $(\psi_m,\theta_m) \in W_m$ satisfying, $\|\bigtriangledown \psi_m\|_2^2 + \|\bigtriangledown \theta_m\|_2^2 \leq R^2, R^2=C^2 (\|f_1\|_2^2+\|f\|_2^2)/2B$ and $B=\mbox{min}\{ (1-CR_a/2-AL),(1/2-R_a/2-AL)\}>0$.
\end{lem}
\begin{proof}Since $W_m$ is a Hilbert space of finite dimension we use Brouwer's fixed point theorem.To this end we introduce the map
\cite{Gir09} $F_m:W_m \rightarrow W_m$, defined for all $(\psi,\theta)~in~ W_m~$ by the following: for all $(\psi,\theta)\in W_m$
\begin{equation}
(F_m(\psi,\theta),(v_1,v_2))=\int_{\Omega}\nabla \psi\cdot\nabla v_1- R_a\int_{\Omega}(\frac{\partial \theta}{\partial x})v_1 + \int_{\Omega}\nabla \theta \cdot \nabla v_2 + \int_\Omega J v_2 - \int_{\Omega}(f_{1}v_1+f_{2}v_2)=0
{\nonumber}\end{equation}
$,~\forall (\psi,\theta)~\in W_m.$ Clearly $F_m $ defines a map from $W_m$ onto itself. As $W_m$ is finite dimensional this map is continuous. Furthermore, any zero of $F_m$ is a solution of problem $P_m$.
Let us evaluate $F_m((\psi,\theta),(\psi,\theta))~for ~ all ~(\psi,\theta) \in W_m ~ satisfying$\\
$\|\bigtriangledown \psi\|_2^2+\| \bigtriangledown \theta\|_2^2=R^2 ~\mbox{where}~
R^2=\frac{C^2 \| f _1\|_2^2+\| f _2\|_2^2}{2B}$.
\begin{equation}
Thus~ F_m((\psi,\theta),(\psi,\theta))
=\|\nabla \psi\|_2^2+\|\nabla \theta\|_2^2-R_a\int_{\Omega}(\frac{\partial \theta}{\partial x})\psi+\int_{\Omega} J (\psi,\theta)\theta -\int_{\Omega}(f_{1}\psi+f_{2}\theta ){\nonumber}\end{equation}
\begin{equation}
 \geq \|\nabla \psi\|_2^2 + \|\nabla \theta\|_2^2 - R_a \int_\Omega \mid(\frac{\partial \theta}{\partial x})\psi\mid -\int_\Omega \mid J(\psi,\theta)\theta \mid- \int_{\Omega}(f_{1}\psi+f_{2}\theta)\\
 {\nonumber}\end{equation}
\begin{equation}
 \geq \|\nabla \psi\|_2^2 + \|\nabla \theta\|_2^2 - R_a \int_\Omega \mid(\frac{\partial \theta}{\partial x})\psi\mid -\int_\Omega \mid \big[\frac{\partial \psi}{\partial x}\frac{\partial \theta}{\partial y}-\frac{\partial \psi}{\partial y}\frac{\partial \theta}{\partial x}]\theta \mid -\int_{\Omega}(f_{1}\psi+f_{2}\theta){\nonumber}\end{equation}
\begin{equation}
 \geq\|\nabla \psi\|_2^2 + \|\nabla \theta\|_2^2 - C\frac{R_a}{2} \|\nabla \psi \|_2^2 -\frac{R_a}{2} \|\nabla \theta \|_2^2 -A L (\|\nabla \psi\|_2^2+ \| \nabla \theta \|_2^2)-C(\|f_1\|_2\|\nabla \psi\|_2+\|f_2\|_2\|\nabla \theta \|_2){\nonumber}\end{equation}
 \begin{equation}\big[\int_\Omega \mid \frac{\partial \psi}{\partial x}\frac{\partial \theta}{\partial y}\theta \mid ~\leq~\|\frac{\partial \psi}{\partial x}\|_2 \|\frac{\partial \theta}{\partial y}\|_4\|\theta\|_4~\leq~LA\|\nabla \psi\|_2 \| \nabla \theta\|_2~\leq~\frac{LA}{2}(\|\nabla \psi \|_2^2+\|\nabla \theta\|_2^2)]{\nonumber}\end{equation}\\$\big[\mbox{By using the condition in theorem1 Sobolev embedding and Holder's inquality}\big]$\\
 Similarly, \begin{equation}\int_\Omega \mid \frac{\partial \psi}{\partial x}\frac{\partial \theta}{\partial y}\theta \mid ~\leq~\frac{LA}{2}(\|\nabla \psi \|_2^2+\|\nabla \theta\|_2^2) {\nonumber}\end{equation}
 \begin{equation}
 So,F_m((\psi,\theta),(\psi,\theta))~ \geq~ (1/2-CR_a/2-AL)\|\nabla  \psi\|_2^2+(1/2-R_a-AL)\|\nabla \theta\|_2^2-\frac{C^2}{2}(\|f_1\|_2^2+\|f_2\|_2^2){\nonumber}\end{equation}
  \begin{equation} \geq B(\|\nabla  \psi\|_2^2+\|\nabla \theta\|_2^2)-\frac{C^2}{2}(\|f_1\|_2^2+{\|f_2\|_2^2})=0
{\nonumber}\end{equation}
on the surface of the sphere centered at origin with radius R where\\ $R^2=\frac{C^2}{2B}(\|f_1\|_2^2+{\|f_2\|_2^2})$.(By Poincare inequality, Sobolev embedding, triangle inequality , and Young's inequality).\\ Then classical variant of Brouwer's fixed point theorem $ F_m $ has atleast one zero in this ball. This yields existence of the solution
$(\psi_m,\theta_m)$ of $P_m.$
\end{proof}
\begin{lem} Our problem P has atleast one solution $(\psi,\theta)~ \in~ (H_0^1(\Omega))^2$ satisfying the estimate $\|\nabla \psi\|_2^2+\|\nabla \theta\|_2^2 \leq R^2$ when the data satisfy the estimate as mentioned in Theorem $1$.\end{lem}
\begin{proof}
$(\psi_m,\theta_m)$  obtained from Theorem 1 is uniformly bounded by R and hence has a weakly convergent sub-sequence converging to $(\psi,\theta), say.$ W.L.O.G assume that $(\psi_m,\theta_m)$ converges weakly to $(\psi,\theta)$. Let $v=(v^1,v^2) \in (H_0^1(\Omega))^2$. Then there exists $\{e_m\} ~\in~W_m~such ~that~e_m \rightarrow v~ strongly~in ~H_0^1(\Omega)~and~L_2(\Omega). $\\
\begin{equation}
CLAIM: \lim_{m \to \infty}[\int_{\Omega}\nabla \psi_m\cdot\nabla e_m^1- R_a\int_{\Omega}(\frac{\partial \theta}{\partial x})e_m^1 + \int_{\Omega}\nabla \theta
\cdot \nabla e_m^2 + \int_\Omega \mid J \mid e_m^2 - \int_{\Omega}(f_1 e_m^1+f_2 e_m^2)]
{\nonumber}\end{equation}
\begin{equation}
=\int_{\Omega}\nabla \psi\cdot\nabla v^1- R_a\int_{\Omega}(\frac{\partial \theta}{\partial x})v^1 + \int_{\Omega}\nabla \theta \cdot \nabla v^2 + \int_\Omega \mid J \mid v^2 - \int_{\Omega}(f_1v^1+f_2v^2).
{\nonumber}\end{equation}
 PASSING TO THE LIMIT:It is very easy to pass to the limit for the linear terms.\\
{Dealing with non-linear term}:\begin{equation}\int_{\Omega}[J (\psi_m,\theta_m)v_m^2 -J (\psi,\theta)v^2]{\nonumber}\end{equation}
\begin{equation}
 =\int_{\Omega}[\frac{\partial\psi_m}{\partial x} \frac{\partial \theta_m}{\partial y}v_m^2-\frac{\partial\psi_m}{\partial y} \frac{\partial \theta_m}{\partial x}v_m^2-\frac{\partial\psi}{\partial x} \frac{\partial \theta}{\partial y}v^2+\frac{\partial\psi}{\partial y} \frac{\partial \theta}{\partial x}v^2].{\nonumber}\end{equation}
\begin{equation}
Now [\frac{\partial \psi_m}{\partial x}\frac{\partial \theta_m}{\partial y}v_m^2-\frac{\partial \psi}{\partial x}\frac{\partial \theta}{\partial y}v^2]
  =\frac{\partial \psi_m}{\partial x}\frac{\partial \theta_m}{\partial y}v_m^2-
 \frac{\partial \psi_m}{\partial x}\frac{\partial \theta}{\partial y}v^2+\frac{\partial \psi_m}{\partial x}\frac{\partial \theta}{\partial y}v^2-\frac{\partial \psi}{\partial x}\frac{\partial \theta}{\partial y}v^2{\nonumber}\end{equation}
\begin{equation}
 =\frac{\partial \psi_m}{\partial x}[\frac{\partial \theta_m}{\partial y}v_m^2-\frac{\partial \theta}{\partial y}v^2]+
 \frac{\partial \theta}{\partial y}v^2(\frac{\partial \psi_m}{\partial x}-\frac{\partial \psi}{\partial x})].{\nonumber}\end{equation}
 \begin{equation}
 \mid \int_{\Omega}\frac{\partial \theta}{\partial y}v^2(\frac{\partial \psi_m}{\partial x}-\frac{\partial \psi}{\partial x})\mid \leq \mid v^2 \mid _{\infty}\| \frac{\partial}{\partial x}(\frac{\partial \theta}{\partial y}) \|_2\|\psi_m-\psi\|_2 \rightarrow 0 ~as~  m \rightarrow \infty.{\nonumber}\end{equation}
\begin{equation}
 \frac{\partial \theta_m}{\partial y}v_m^2-\frac{\partial \theta}{\partial y}v^2=\frac{\partial \theta_m}{\partial y}v_m^2-\frac{\partial \theta_m}{\partial y}v^2+\frac{\partial \theta_m}{\partial y}v^2-\frac{\partial \theta}{\partial y}v^2=\frac{\partial \theta_m}{\partial y}(v_m^2-v^2) +(\frac{\partial \theta_m}{\partial y}-\frac{\partial \theta}{\partial y})v^2{\nonumber}\end{equation}
Our obtained solution lies in the ball of radius $R$ by Lemma $1$. $\Rightarrow ~\|\nabla \psi\|_2~ \leq R$
\begin{equation}
\Rightarrow \int_{\Omega}(\mid{\frac{\partial \psi}{\partial x}}\mid^2+\mid{\frac{\partial \psi}{\partial y}}\mid^2 )\leq R^2=\int_{\Omega}R^2 d \mu
{\nonumber}\end{equation}
\begin{equation}
 \Rightarrow \mid \frac{\partial \psi}{\partial x}\mid^2  \leq R^2/2~a.e.,~\mid \frac{\partial \psi}{\partial y}\mid^2 \leq R^2/{2}~a.e.
 {\nonumber}\end{equation}
 \begin{equation}
 \Rightarrow \mid \frac{\partial \psi}{\partial x}\mid\leq R/\surd{2}~a.e.~and~\mid \frac{\partial \psi}{\partial y}\mid \leq R/\surd{2}~a.e.
{\nonumber}\end{equation}
\begin{equation}
So, \mid \int_{\Omega}\frac{\partial \psi_m}{\partial x}(v_m^2\frac{\partial \theta_m}{\partial y}-\frac{\partial \theta}{\partial y}v^2)\mid\\
\leq \int_{\Omega}\mid \frac{\partial \psi_m}{\partial x}\frac{\partial \theta_m}{\partial y}(v_m^2-v^2)\mid+\int_{\Omega}\mid \frac{\partial \psi_m}{\partial x}(\frac{\partial \theta_m}{\partial y}-\frac{\partial \theta}{\partial y})v^2 \mid{\nonumber}\end{equation}
\begin{equation}
\leq \frac{R^2}{2}\|v_m^2-v^2\|_2\mu(\Omega)^{1/2}+\frac{R}{\surd{2}}\|\frac{\partial v}{\partial y}\|_2\|\theta_m-\theta\|_2 ~\rightarrow~0~as~m~\rightarrow~\infty.
{\nonumber}\end{equation}
Similarly $\int_\Omega \mid \frac{\partial \psi_m}{\partial y}\frac{\partial \theta_m}{\partial x}v_m^2-\frac{\partial \psi}{\partial y}\frac{\partial \theta}{\partial x}v^2\mid~\rightarrow ~0~as~m \rightarrow ~\infty.\mu(\Omega)$ denotes the Lebesgue measure of $\Omega$.
\end{proof}
\begin{lem} Problem P has at most one solution $(\psi,\theta), (1-\frac{CR}{\surd{2}}-\frac{CR_a}{2}) >0,\\(1-\frac{R_a}{2}-2\surd{2}RC) >0.$
\end{lem}
\begin{proof}Let $(\psi_1,\theta_1)~and~(\psi_2,\theta_2)$ be two different  solutions of the problem satisfying the above estimate.\\
  $-\int_{\Omega} \triangle \psi_1 v_1 - R_a \int_{\Omega}(\frac{\partial \theta}{\partial x})v_1 - \int_{\Omega}\triangle \theta_1 v_2 + \int_{\Omega} J(\psi_1,\theta_1)  v_2 -\int_{\Omega}f_1v_1-int_{\Omega}f_2v_2 = 0$.\\
   $-\int_{\Omega} \triangle \psi_2 v_1 - R_a \int_{\Omega}({\frac{\partial \theta}{\partial x}})v_1 - \int_{\Omega}\triangle \theta_2 v_2 + \int_{\Omega}J(\psi_2,\theta_2) v_2 -\int_{\Omega}f_1v_1\int_{\Omega}f_2v_2 = 0$ where $v_1,v_2~ \in~ H_0^1(\Omega).$\\$~$\\
Subtracting both the equations and taking $\psi_1-\psi_2=\bar \psi=v_1,\theta_1-\theta_2=\bar \theta =v_2$ as test functions we have
\begin{equation}
\| \nabla \bar \psi\|_2^2 + \|\nabla \bar  \theta\|_2^2  -R_a\int_{\Omega}\frac{\partial {\bar \theta}}{\partial x}{{\bar \psi}} +
\int_{\Omega}[ J (\psi_1,\theta_1) - J (\psi_2,\theta_2)]\bar \theta =0.
\end{equation}
Now let us estimate the term

$\int_\Omega[ J (\psi_1,\theta_1) - J (\psi_2,\theta_2)]\bar \theta $.
\begin{equation}
[ J (\psi_1,\theta_1) - J (\psi_2,\theta_2)]\bar \theta {\nonumber}\end{equation}
\begin{equation}
=\frac{\partial \psi_1}{\partial x}\frac{\partial \theta_1}{\partial y}\bar \theta-\frac{\partial \psi_1}{\partial y}\frac{\partial \theta_1}{\partial x}\bar \theta-\frac{\partial \psi_2}{\partial x}\frac{\partial \theta_2}{\partial y}\bar \theta+\frac{\partial \psi_2}{\partial y}\frac{\partial \theta_2}{\partial x}\bar \theta.
{\nonumber}\end{equation}
 \begin{equation}=\frac{\partial(\bar \psi+\psi_2)}{\partial x}\frac{\partial(\bar \theta +\theta_2)}{\partial y}\bar \theta-
\frac{\partial(\bar \psi+\psi_2)}{\partial y}\frac{\partial(\bar \theta +\theta_2)}{\partial x}\bar \theta-\frac{\partial\psi_2}{\partial x}\frac{\partial \theta_2}{\partial y}\bar \theta+\frac{\partial \psi_2}{\partial y}\frac{\partial \theta_2}{\partial x}\bar \theta{\nonumber}\end{equation}
\begin{equation}
=\frac{\partial \bar \psi}{\partial x}\frac{\partial \bar \theta}{\partial y}\bar \theta+\frac{\partial \bar \psi}{\partial x}\frac{\partial \theta_2}{\partial y}\bar \theta+\frac{\partial \psi_2}{\partial x}\frac{\partial \bar \theta}{\partial y}\bar \theta-\frac{\partial \bar \psi}{\partial y}\frac{\partial \bar \theta}{\partial x}\bar \theta-\frac{\partial \bar \psi}{\partial y}\frac{\partial \theta_2}{\partial x}\bar \theta-\frac{\partial \bar \theta}{\partial x}\frac{\psi_2}{\partial y}\bar \theta\\
{\nonumber}\end{equation}
Our obtained solutions lie in the ball of radius R by Lemma$1$. So,
\begin{equation}
\|\nabla \psi_1\|_2,\|\nabla \psi_2\|_2~ \leq R
{\nonumber}\end{equation}
\begin{equation}
\|\nabla \theta_1\|_2,\|\nabla \\theta_2\|_2~ \leq R
{\nonumber}\end{equation}

 \begin{equation}
 \Rightarrow \mid \frac{\partial \psi_i}{\partial x}\mid\leq R/\surd{2}~a.e.~and~\mid \frac{\partial \psi_i}{\partial y}\mid \leq R/\surd{2}~a.e.~\forall~i=1,2
{\nonumber}\end{equation}
\begin{equation}
 \mbox{Similarly,} \mid \frac{\partial \theta_i}{\partial x}\mid\leq R/\surd{2}~a.e.~and~\mid \frac{\partial \theta_i}{\partial y}\mid \leq R/\surd{2}~a.e.~\forall~i=1,2
{\nonumber}\end{equation}

\begin{equation}
\|\frac{\partial \bar \psi}{\partial x}\|_2 \leq \|\frac{\partial \psi_1}{\partial x}\|_2 + \|\frac{\partial \psi_2}{\partial x}\|_2 \leq \surd{2}R ~a.e.{\nonumber}\end{equation}
Therefore by using Cauchy-Schwartz and Poincare inequality we have
\begin{equation}
\int_{\Omega} \mid \frac{\partial {\bar \psi}}{\partial x} \frac{\partial {\bar \theta}}{\partial y} \bar \theta \mid \leq \surd{2} R C \|\nabla {\bar \theta}\|_2^2{\nonumber}\end{equation}
\begin{equation}
\int_{\Omega}\mid \frac{\partial \bar \psi}{\partial x}\frac{\partial \theta_2}{\partial y}\bar \theta\mid \leq \frac{CR}{\surd{2}} \|\nabla \bar \psi \|_2\|\bar \theta \|_2 \leq \frac{CR}{2\surd{2}}(\| \nabla  \bar \psi\|_2^2+\| \nabla \bar \theta \|_2^2){\nonumber}\end{equation}
\begin{equation}
\int_{\Omega}\mid \frac{\partial \psi_2}{\partial x}\frac{\partial \bar \theta}{\partial y}\bar \theta \mid \leq \frac{CR}{\surd{2}} \| \nabla \bar \theta \|_2^2{\nonumber}\end{equation}
\begin{equation}
\int_{\Omega}\mid \frac{\partial \bar \psi}{\partial y}\frac{\partial \theta_2}{\partial x}\bar \theta \mid \leq \frac{RC}{2 \surd{2}}(\| \nabla \bar \psi\|_2^2+\| \nabla \bar \theta \|_2^2) \leq \frac{RC}{\surd{2}}\|\nabla \bar\theta \|_2^2.{\nonumber}\end{equation}
After further estimation and using (3) and Holder's inequality we remain with
\begin{equation}
(1-\frac{CR}{\surd{2}}-\frac{CR_a}{2})\|{\nabla \bar \psi }\|_2^2 + (1-\frac{R_a}{2}-2\surd{2}RC)\|{\nabla \bar\theta}\|_2^2 \leq 0 ~a.e..{\nonumber}\end{equation}
By given conditions using $(1-\frac{CR}{\surd{2}}-\frac{CR_a}{2}),(1-\frac{R_a}{2}-2\surd{2}RC) >0$ we have  $\bar \psi =0 ~and~\bar\theta=0 $ that means  solution is unique.
\end{proof}
\end{proof}
\textbf{Stability estimate:}
 The solution is continuously dependent on the source term under the conditions $(\frac{1}{2}-\frac{CR}{2\surd{2}}-\frac{CR_a}{2}),(\frac{1}{2}-\frac{R_a}{2}-\frac{3}{2\surd{2}}RC) >0$. \\
From the previous step the system has a unique solution $(\psi,\theta)$ under the above mentioned conditions. Now taking $(\psi,\theta)$ as the test function in the weak formulation we have

\begin{equation}
\|{\nabla \psi }\|_2^2 + \|{\nabla \theta}\|_2^2-\frac{R_a}{2}(C\|{\nabla  \psi }\|_2^2+\|{\nabla \theta}\|_2^2)-\frac{CR}{\surd{2}}\|{\nabla \psi }\|_2^2-2\surd{2}RC\|{\nabla \theta}\|_2^2
 \leq C(\|f_2\|_1\|\nabla \psi\|_2+\|f_2\|_2\|\nabla \theta\|_2)
\nonumber\end{equation}
\begin{equation}
(\frac{1}{2}-\frac{CR}{2\surd{2}}-\frac{CR_a}{2})\|{\nabla \psi }\|_2^2+(\frac{1}{2}-\frac{R_a}{2}-\frac{3}{2\surd{2}}RC)\|{\nabla \theta}\|_2^2 \leq \frac{C^2}{2}({\|f_1\|_2^2+\|f_2\|_2^2}).
\nonumber\end{equation}
$\Rightarrow (\frac{1}{2}-\frac{CR}{2\surd{2}}-\frac{CR_a}{2})\|{\nabla \psi }\|_2^2+(\frac{1}{2}-\frac{R_a}{2}-\frac{3}{2\surd{2}}RC)\|\nabla \theta\|_2^2 \leq \frac{C^2}{2}({\|f_1\|_2^2+\|f_2\|_2^2})$
\\$~$\\\big[Using the estimate from Lemma $2$ we have
\begin{equation}
 \mid \frac{\partial \psi}{\partial x}\mid\leq R/\surd{2}~a.e.,~\mid \frac{\partial \psi}{\partial y}\mid \leq R/\surd{2}~a.e.~and~
 \mid \frac{\partial \theta}{\partial x}\mid\leq R/\surd{2}~a.e.~,~\mid \frac{\partial \theta}{\partial y}\mid \leq R/\surd{2}~a.e.~
{\nonumber}\end{equation}

So, we have the estimation of the nonlinear term as
\begin{equation}
\int_{\Omega}\mid J(\psi,\theta)\theta \mid ~\leq \frac{CR}{2\surd{2}}\|\nabla \psi\|_2^2+\frac{3}{2\surd{2}}RC\|\nabla \theta\|_2^2
\nonumber \end{equation}]\\
Let $L=min\{(\frac{1}{2}-\frac{CR}{2\surd{2}}-\frac{CR_a}{2}),(\frac{1}{2}-\frac{R_a}{2}-\frac{3}{2\surd{2}}RC)\}$. We have assumed that $L>0$.\\
$\Rightarrow \|{\nabla \psi }\|_2^2+\|\nabla \theta\|_2^2 \leq \frac{C^2}{2L}(\|f_1\|_2^2+\|f_2\|_2^2)$\\
$\Rightarrow \|{\nabla \psi }\|_2,\|\nabla \theta\|_2 \leq \frac{C}{2\surd{L}}{\surd{(\|f_1\|_2^2+\|f_2\|_2^2)}}$

\newpage
\section{Finite Element Analysis}\label{sec:3}
\label{}
\textbf{Finite Element Formulation:} Let $\tau_h$ be the finite element partition of the domain $\Omega$ with mesh size h i.e.
$\Omega=\underset{k \in \tau_h}{\cup K }$. Define the finite element spaces corresponding to the Velocity, and temperature respectively as\\ $V_h=\{v~\in (H^1(\Omega))^2:V|_K \in V_r(K),\forall K \in \tau_h \}$ $V_r $ is the space of polynomial of degree $r\geq 1$.\\
\textbf{Interpolation error estimate:} We assume that the projection operator $\Pi^1_h:H^1(\Omega) \rightarrow V_h$  which satisfies the following interpolation error estimate
\begin{equation}
\|(v-\Pi^1_h v)\|_{s,K} \leq Ch^{t-s}\|v\|_{t,K}~s=0,1,~1\leq t\leq (r+1).\end{equation}
\begin{thm}
Let $(\psi,\theta)$ be the unique solution of the system of PDE $(1)-(2)$  under the conditions mentioned in theorem $1$. Let the discrete problem has a unique solution $(\psi_h,\theta_h)$ under the same conditions.\\ Then $|\Pi_h^1\psi-\psi_h|_{1,\Omega} \leq \frac{1}{2}{[\|\nabla(\Pi_h^1\psi-\psi) \|_2+(R_a^2+R+1)\|\nabla (\Pi_h^1\delta-\delta)\|_2]}$\\$|\Pi_h^1\psi-\psi_h|_{1,\Omega} \leq \frac{c}{2}{[\|\nabla(\Pi_h^1\delta-\delta) \|_2+(R_a^2+R+1)\|\nabla (\Pi_h^1\delta-\delta)\|_2]}$, $c=\frac{1}{\surd((1-2\surd{2}R))}$.

\end{thm}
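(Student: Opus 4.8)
The plan is to carry out a Céa-type error analysis, treating the pair $(\Pi_h^1\psi-\psi_h,\,\Pi_h^1\theta-\theta_h)$ as the quantity to be controlled and exploiting the same coercivity that underlies the uniqueness argument of Lemma 2.3. First I would write the continuous weak formulation satisfied by the exact solution $(\psi,\theta)$ and the discrete weak formulation satisfied by $(\psi_h,\theta_h)$, both tested against the same discrete functions $v_h=(v_h^1,v_h^2)\in V_h$. Since $V_h\subset(H_0^1(\Omega))^2$, the exact solution also satisfies its identity for these discrete test functions, so subtracting the two statements yields a Galerkin-orthogonality relation for the error.

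Next I would introduce the interpolant and split each component through $\psi-\psi_h=(\psi-\Pi_h^1\psi)+(\Pi_h^1\psi-\psi_h)$, and likewise for $\theta$. Choosing $v_h^1=\Pi_h^1\psi-\psi_h$ and $v_h^2=\Pi_h^1\theta-\theta_h$, the diagonal Dirichlet terms reproduce $|\Pi_h^1\psi-\psi_h|_{1,\Omega}^2+|\Pi_h^1\theta-\theta_h|_{1,\Omega}^2$ on the left, while the interpolation errors $\psi-\Pi_h^1\psi$ and $\theta-\Pi_h^1\theta$ are moved to the right, to be bounded afterwards by the interpolation estimate (3.1). The linear coupling term $R_a\int_\Omega(\partial\theta/\partial x)v_h^1$ I would treat by Cauchy--Schwarz, the Poincaré inequality, and Young's inequality, which is what produces the $R_a^2$ coefficient appearing in the stated bound.

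The heart of the argument is the nonlinear contribution $\int_\Omega[J(\psi,\theta)-J(\psi_h,\theta_h)]v_h^2$. I would expand this difference exactly as in Lemma 2.3, inserting $\psi-\psi_h$ and $\theta-\theta_h$ additively into the Jacobian and grouping the six resulting products. Using the a priori pointwise bounds from Lemma 2.2, namely $|\partial\psi/\partial x|,|\partial\psi/\partial y|,|\partial\theta/\partial x|,|\partial\theta/\partial y|\le R/\surd{2}$ a.e., together with Hölder's inequality, the Sobolev embedding, and Poincaré, each product is dominated by $R$ times a product of gradient norms of the two errors; splitting those errors once more into interpolation and discrete parts then feeds the $R$-dependent coefficients on the right-hand side.

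Finally I would collect the diagonal quadratic terms on the left and absorb the discrete-error contributions coming from the nonlinearity, which is permitted precisely by the positivity conditions $(1-CR/\surd{2}-CR_a/2)>0$ and $(1-R_a/2-2\surd{2}RC)>0$ of Lemma 2.3. This coercivity is what makes the discrete form invertible on $V_h$ and yields the constant $c=1/\surd{((1-2\surd{2}R))}$ in the second inequality. The hard part will be the nonlinear Jacobian difference: one must verify that, after the interpolant is inserted, the cross terms coupling $\Pi_h^1\psi-\psi_h$ with $\Pi_h^1\theta-\theta_h$ are genuinely absorbable into the left-hand side under the smallness hypotheses on $R$ and $R_a$, rather than destroying the coercivity — this is exactly where the a priori bound $R$ and the conditions of Theorem 2.1 must be used sharply.
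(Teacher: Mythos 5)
Your proposal follows essentially the same route as the paper's own proof: Galerkin orthogonality between the continuous and discrete formulations, splitting the error through the interpolant ($\xi=\psi-\Pi_h^1\psi$, $\xi_h=\psi_h-\Pi_h^1\psi$, and likewise $\delta,\delta_h$), testing with the discrete error pair, bounding the Jacobian difference via the a.e.\ gradient bounds $R/\sqrt{2}$ inherited from Lemma 2.2, and absorbing the nonlinear contribution under the smallness condition to obtain $c=1/\sqrt{1-2\sqrt{2}R}$. The only real divergence is that you estimate the coupling term $R_a\int_\Omega(\partial\theta/\partial x)\,v_h^1$ explicitly by Cauchy--Schwarz, Poincar\'e and Young (which is what produces an $R_a^2$-type coefficient consistent with the theorem statement), whereas the paper silently discards this term in its chain of inequalities; your treatment is the more defensible one.
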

\textbf{Discrete Problem:}Corresponding weak formulation is
\begin{equation}
\int_{\Omega}\nabla \psi_h\cdot\nabla v_h- R_a\int_{\Omega}\frac{\partial \theta_h}{\partial x}v_h + \int_{\Omega}\nabla \theta\cdot\nabla \phi_h + \int_\Omega  J \phi_h - \int_{\Omega}(f_1 v_h+f_2 \phi_h)=0\nonumber\end{equation}where $v_h,\phi_h~\in~H_0^1(\Omega), \Omega=(0,1)\times (0,1)$.\\
\textbf{Error Equation:}\begin{equation}
\int_{\Omega}\nabla(\psi- \psi_h)\cdot\nabla v_h- R_a\int_{\Omega}\frac{\partial( \theta-\theta_h)}{\partial x}v_h + \int_{\Omega}\nabla (\theta-\theta_h)\cdot\nabla \phi_h + \int_\Omega  [J(\psi,\theta)-J(\psi_h,\theta_h)]\phi_h=0
\nonumber\end{equation}
Let $\xi=\psi-\Pi_h \psi,~\xi_h=\psi_h-\Pi_h \psi,~\delta=\theta-\Pi_h \theta,~\delta_h=\theta_h-\Pi_h \theta.$\\
Taking $v_h=\xi_h,~ \phi_h=\delta_h$ in the equation (5) we have\\
\begin{equation}
\int_{\Omega}\nabla(\psi- \psi_h)\cdot\nabla \xi_h- R_a\int_{\Omega}\frac{\partial( \theta-\theta_h)}{\partial x}\xi_h + \int_{\Omega}\nabla (\theta-\theta_h)\cdot\nabla \delta_h + \int_\Omega  [J(\psi,\theta)-J(\psi_h,\theta_h)]\delta_h=0.
\nonumber\end{equation}
\begin{equation}
\int_{\Omega}\nabla(\xi- \xi_h)\cdot\nabla \xi_h- R_a\int_{\Omega}\frac{\partial( \delta-\delta_h)}{\partial x}\xi_h + \int_{\Omega}\nabla (\delta-\delta_h)\cdot\nabla \delta_h + \int_\Omega  [J(\psi,\theta)-J(\psi_h,\theta_h)]\delta_h=0.
\nonumber\end{equation}
$\Rightarrow$
\begin{equation}
\begin{split}
\|\nabla \xi_h\|_2^2+\|\nabla \delta_h\|_2^2 &= \int_{\Omega}\nabla \xi \cdot \nabla \xi_h-R_a\int_{\Omega}(\delta-\delta_h)\xi_h
+\int_\Omega  [J(\psi,\theta)-J(\psi_h,\theta_h)]\delta_h+\int_\Omega \nabla \delta \cdot \nabla \delta_h\\
&\leq \int_{\Omega}\nabla \xi \cdot \nabla \xi_h
+\int_\Omega  [J(\psi,\theta)-J(\psi_h,\theta_h)]\delta_h+\int_\Omega \nabla \delta \cdot \nabla \delta_h\\
&\leq \frac{1}{2}(\|\nabla \xi\|_2^2+\|\nabla \xi_h\|_2^2)+\frac{1}{2}(\surd{2}R+1)^{2}\|\nabla \delta\|_2^2+\frac{1}{2}\|\nabla \delta_h\|_2^2+\surd{2}R\|\nabla \delta_h\|_2^2.
\end{split}
\nonumber\end{equation}

\begin{equation}
\Rightarrow\|\nabla\xi_h\|_2^2+(1-2\surd{2}R)\|\nabla\|_2^2 \leq [\|\nabla \xi\|_2^2+(1+\surd{2}R)^{2}\|\nabla \delta\|_2^2]
\end{equation}
Then using the inequality $ a^2+b^2\leq(a+b)^2$ in (7) we have\\
 \begin{equation}\|\nabla(\Pi_h^1\psi-\psi_h)\|_{2} \leq \frac{1}{2}{[\|\nabla(\Pi_h^1\psi-\psi) \|_2+(1+\surd{2}R)\|\nabla (\Pi_h^1\delta-\delta)\|_2]}\end{equation}
\begin{equation}
\|\nabla(\Pi_h^1\psi-\psi_h)\|_{2} \leq \frac{c}{2}{[\|\nabla(\Pi_h^1\delta-\delta) \|_2+(1+\surd{2}R)\|\nabla (\Pi_h^1\delta-\delta)\|_2]}\end{equation}
where $c=\frac{1}{\surd((1-2\surd{2}R))}$,
 with appropriately small source term.\\$~$\\
\textbf{Remark $1$:}
The above proof holds for properly chosen finite element space and interpolation operator.\\
\textbf{Remark $2$:}
Observing the RHS of (3.3) and (3.4) and using (3.1) we have order of convergence is 1 for both velocity and temperature.

\section{Numerical Experiment:}
Here we have taken the test problem with the strong form written below
\begin{equation}
\triangle\psi = -R_a\frac{\partial \theta}{\partial x} +f_1\nonumber~~~~~ ~~~~~~~~~~~~~~~(1)
\end{equation}
\begin{equation}
J(\psi,\theta) = \triangle\theta+f_2, \nonumber~~~~~~~~~~~~~~~~~~~~~~~(2)
\end{equation},$\psi,\theta~\in~C^2(\Omega), \Omega=(0,1)\times (0,1)$ with the exact solutions\\ $\psi=2x^2(x-1)^2 y(y-1)(2y-1), \theta =(-2)y^2(y-1)^2 x(x-1)(2x-1)$, \\which vanishes on the boundary of the domain $\Omega$, with compatible $f_1,~f_2.$ 

Here we have used FEM++ software for numerical computation. Contours representing the computed solution solutions corresponding to velocity and temperature of the above problem are depicted in Figures given below with different values of $R_a$ .\\ 

\textbf{Algorithm for FEM computation:}\\
Let us denote the pair $(\psi,\theta)=u$.\\
Step1: Choose the initial vector $u_0=(\psi_{0},\theta_{0})~\in~\mathbb R^{2}$.\\
Step2: Choose $\epsilon=1e-08;$\\
int $i=0$;\\
while \\
$\{\|w_i\|_2<\epsilon\}$
then\\
begin\\
solve\\
(a)$DF(u_i)w_i=F(u_i);$~~
(b)$u_{i+1}=u_i-w_i;$\\
break  if $\|w_i\|~<\epsilon.$\\$~$\\
where $DF$ and F satisfy
$F(u+\delta)=F(u)+DF(u)\delta+o(\delta).$\\Here F and $DF$ are given as follows\\
$F(u)= F(\psi,\theta)=\int_{\Omega}(\nabla \psi \cdot \nabla v+\nabla \theta \cdot \nabla \tau+J(\psi,\theta)\tau-R_a(\frac{\partial \theta}{\partial x})v-f_1 v-f_2 \tau)$\\$~$\\
$Df(\psi,\theta)(\delta \psi,\delta \theta)=\int_{\Omega}((\nabla (\delta\psi) \cdot \nabla v+\nabla (\delta \theta )\cdot \nabla \tau-R_a(\frac{\partial \theta}{\partial x})v)\\+\int_{\Omega}(\frac{\partial(\delta \psi)}{\partial x}\frac{\partial \theta}{\partial y}+\frac{\partial \psi}{\partial x}\frac{\partial(\delta \theta)}{\partial y}-\frac{\partial(\delta \psi)}{\partial y}\frac{\partial \theta}{\partial x}-\frac{\partial \psi}{\partial y}\frac{\partial(\delta \theta)}{\partial x})\tau-\int_{\Omega}(f_1 v+f_2 \tau).$\\$~$\\
$i=i+1$\\
end



\bibliographystyle{amsplain}

\end{document}